\documentclass[12pt]{amsart}
\UseRawInputEncoding
\usepackage{color}
\usepackage{dsfont}
\usepackage{url}
\usepackage{hyperref}
\usepackage{mathtools}
\usepackage{anysize}
\usepackage{geometry}
\marginsize{2cm}{2cm}{2cm}{2cm}
\usepackage[normalem]{ulem}
\setcounter{tocdepth}{5}
\theoremstyle{plain}
\numberwithin{equation}{section}
\newtheorem{thm}{Theorem}[section]

\newtheorem{lem}[thm]{Lemma}

\newcommand{\PP}[1]{\left(#1\right)}
\newcommand{\CC}[1]{\left[#1\right]}
\newcommand{\LL}[1]{\left\{#1\right\}}

\newcommand{\abs}[1]{\left|#1\right|}
\renewcommand{\Pr}[1]{\mathbb{P}\left(#1\right)}

\newcommand{\R}{\mathbb{R}}
\newcommand{\C}{\mathbb{C}}
\newcommand{\N}{\mathbb{N}}

\newcommand{\cA}{\mathcal{A}}

\title[Condition Number of Random Tridiagonal Toeplitz Matrix]{Condition Number of Random Tridiagonal Toeplitz Matrix}
\author{Paulo Manrique-Mir\'on}
\email{manriquemiron@gmail.com}
\begin{document}
\begin{abstract}
In this manuscript it is considered the eigenvalues $\lambda_j$ of a random tridiagonal Toeplitz matrix $T$. We study the asymptotic behavior of the joint distribution of $\PP{\abs{\lambda}_{\min} ,\abs{\lambda}_{\max}}$. From this, we obtain the asymptotic distribution of the condition number when $T$ is symmetric. In the non-symmetric case, we understand well the singularity of the matrix and can give some good estimation of its condition number. It is remarkable that in both these cases, it is only necessary to consider two or three random variables, but this simplicity is apparent since the structure of the tridiagonal Toeplitz matrix provides non-trivial relation between them, which also induce the asymptotic behavior is completely determined by these input random variables. Also, we want to remark that our results are satisfied under mild conditions on the random variables.
\end{abstract}
\maketitle

\setcounter{section}{1}

\subsection{\textbf{Introduction}}\label{sec:Intro}

\markboth{{Condition Number of Random Tridiagonal Toeplitz Matrix}}{{Asymptotic Distribution of Condition Number of Random Tridiagonal Toeplitz Matrix}}

Tridiagonal Toeplitz matrix is a very usual and useful structured matrix which appears in many situations as theoretical and numerical problems, for example see \cite{noschese2013tridiagonal} and the references in there. Thanks to its structure it is possible to obtain explicit formulas for quantities of interest as eigenvalues, eigenvectors, or determinant. In the case that the matrix is symmetric, it is possible to give an explicit expression for its condition number. The condition number of a matrix is a useful quantity to understand the numerical stability of an algorithm when it uses the matrix \cite{smale1985efficiency}, \cite{wozniakowski1976numerical}. In fact, this manuscript is dedicated to analyze the condition number of a random tridiagonal Toeplitz matrix. 

A tridiagonal Toeplitz matrix is a matrix which has the following form

\[
\left[\begin{array}{ccccccc}
\delta & \tau &  & & & &  \\
\sigma & \delta & \tau & & & 0 & \\
 & \cdot & \cdot & \cdot & & &\\
& & \cdot & \cdot & \cdot & &\\
&  &  & \cdot &\cdot & \cdot &\\
& 0 &  &  & \sigma & \delta & \tau \\
& & &  & & \sigma & \delta \\
\end{array}\right]
\]
where $\delta,\tau,\sigma\in\C$. When $\delta = X$, $\tau = Y$, and $\tau = Z$, where $X,Y,Z$ are random variables, we say the tridiagonal Toeplitz matrix is random. 

Let $\cA\in \mathbb{C}^{n\times m}$ be a matrix of dimension $n\times m$.
We denote the singular values of $\cA$ in non-decreasing order by $0\leq \sigma^{(n,m)}_{1}(\cA)\leq \cdots \leq \sigma^{(n,m)}_{n}(\cA)$. 
That is to say, they are the {square roots of the } eigenvalues of the $n$-square matrix $\cA^*\cA$, where $\cA^*$ denotes the conjugate transpose matrix of $\cA$.
 The condition number of $\cA$, $\kappa(\cA)$, is defined as
\begin{equation}\label{eq:cnA}
\kappa(\cA):=\frac{\sigma^{(n,m)}_{n}(\cA)}{ \sigma^{(n,m)}_{1}(\cA)}\quad \textrm{ whenever }\quad \sigma^{(n,m)}_{1}(\cA)>0.
\end{equation}

Usually, to compute the condition number is hard. When it is considered a random matrix with all its entries random variables (rv) which are independent and Gaussian then the matrix is invertible with probability one and the limit distribution of its condition number is good understanding \cite{anderson2010exact}, \cite{chen2005condition}, \cite{shakil2018note}. In this manuscript we study the condition number when $X,Y,Z$ are real rv. 

Two remarkable aspects of our result are that a random tridiagonal Toeplitz matrix can be singular with positive probability for either discrete or continuous random entries and it is not necessary to impose strong conditions on $X,Y,Z$.

This manuscript is organized as follows. Sections~\ref{sec:MainResult} gives the statement of our main result. Section ~\ref{sec:Proof} we develop the proof of our main result and Section ~\ref{sec:Examples} we give some examples of random tridiagonal Toeplitz matrices with Radamecher, Cauchy, and Gaussian random entries.

Before starting, we use $\abs{\cdot}$as the norm of a complex number or the absolute value of a real number depending on the context.

\subsection{\textbf{Main Result}} \label{sec:MainResult}

\begin{thm} \label{thm:Jue04May1451}
Let $\mathcal{T}_n^{(3)}$ be a random tridiagonal Toeplitz matrix of order $n$ associated with the rv $X,Y, Z$. For $x,y\in\R$, the joint distribution of singular values $\sigma_{\min}\PP{\mathcal{T}_n^{(3)}}$, $\sigma_{\max}\PP{\mathcal{T}_n^{(3)}}$ satisfies:
\begin{align*}
& \lim_{n\to\infty} \Pr{\sigma_{\min}\PP{\mathcal{T}_n^{(3)}} \leq x, \sigma_{\max}\PP{\mathcal{T}_n^{(3)}}} = \\
&\,\,\,  \Pr{ M \leq y} - \Pr{x<0 \leq M \leq y, -\frac{X}{\abs{Y}}\in[-2,2]} - \Pr{x<m \leq M \leq y, -\frac{X}{\abs{Y}}\not\in[-2,2]},
\end{align*}
where $M=\max\LL{\abs{X-2\abs{Y}},\abs{X+2\abs{Y}}}$, $m=\min\LL{\abs{X-2\abs{Y}},\abs{X+2\abs{Y}}}$.

Let  $T_n^{(3)}$ be a random (non-symmetric) tridiagonal Toeplitz matrix of order $n$ associated  with the rv $X,Y,Z$. For $x,y\in\R$, the joint distribution of norm of the eigenvalues $\abs{\lambda}_{\min}\PP{T_n^{(3)}}$, $\abs{\lambda}_{\max}\PP{T_n^{(3)}}$ satisfies:
\begin{align*}
& \lim_{n\to\infty}\Pr{\abs{\lambda}_{\min}\PP{T_n^{(3)}} \leq x,  \abs{\lambda}_{\max}\PP{T_n^{(3)}} \leq y} =\\
&\,\,\, \Pr{\textnormal{M}\leq y} -\Pr{YZ=0, x<\abs{X}\leq y} - \Pr{YZ>0, -\frac{X}{2\sqrt{YZ}}\in[-1,1], x<0\leq \textnormal{M} \leq y}  \nonumber\\
& \, - \Pr{YZ>0, -\frac{X}{2\sqrt{YZ}}\not\in[-1,1], x<\mu\leq \textnormal{M} \leq y} - \Pr{YZ<0, x<\abs{X}\leq \textnormal{M}\leq y}, 
\end{align*}
where $\mu := \min\LL{\abs{X-2\sqrt{YZ}},\abs{X+2\sqrt{YZ}}}$, $\textnormal{M}:= \max\LL{\abs{X-2\sqrt{YZ}},\abs{X+2\sqrt{YZ}}}$.
\end{thm}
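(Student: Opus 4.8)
The plan is to reduce everything to the classical closed form for the spectrum of a tridiagonal Toeplitz matrix. Recalling that the eigenvalues of $T_n^{(3)}$ are $\lambda_k = X + 2\sqrt{YZ}\cos\PP{\frac{k\pi}{n+1}}$, $k=1,\dots,n$ (with $\sqrt{YZ}$ interpreted in $\C$ when $YZ<0$), the whole problem becomes the deterministic asymptotic analysis of the moduli $\abs{\lambda_k}=\abs{X+2\sqrt{YZ}\cos\PP{\frac{k\pi}{n+1}}}$ once we condition on the triple $\PP{X,Y,Z}$. First I would freeze $\PP{X,Y,Z}$ and split according to the sign of $YZ$: if $YZ=0$ the matrix is triangular and every $\lambda_k=X$, so $\abs{\la}_{\min}=\abs{\la}_{\max}=\abs{X}$ for all $n$; if $YZ>0$ the eigenvalues are real and lie in $\CC{X-2\sqrt{YZ},X+2\sqrt{YZ}}$; if $YZ<0$ they are complex with $\abs{\lambda_k}^2=X^2+4\abs{YZ}\cos^2\PP{\frac{k\pi}{n+1}}$.

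The analytic heart is that the nodes $\LL{\cos\PP{\frac{k\pi}{n+1}}:1\le k\le n}$ become dense in $\CC{-1,1}$ as $n\to\infty$. From this I would read off, in each regime, the limits of the extreme moduli. For $\abs{\la}_{\max}$ one maximizes $\abs{X+2\sqrt{YZ}\,t}$ (resp. $\sqrt{X^2+4\abs{YZ}\,t^2}$) over $t\in\CC{-1,1}$, which is attained at an endpoint, giving $\abs{\la}_{\max}\to \textnormal{M}=\max\LL{\abs{X-2\sqrt{YZ}},\abs{X+2\sqrt{YZ}}}$ in all cases. For $\abs{\la}_{\min}$ one minimizes the same function: when $YZ>0$ the minimum is $0$ precisely if $0\in\CC{X-2\sqrt{YZ},X+2\sqrt{YZ}}$, i.e. $-\frac{X}{2\sqrt{YZ}}\in\CC{-1,1}$, and otherwise it is the endpoint $\mu$; when $YZ<0$ the minimum of $\cos^2$ is $0$, so $\abs{\la}_{\min}\to\abs{X}$; and when $YZ=0$ it is $\abs{X}$. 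This produces the limiting random vector $\PP{\abs{\la}_{\min},\abs{\la}_{\max}}\to\PP{L_{\min},\textnormal{M}}$ with $L_{\min}$ given case by case.

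To obtain the joint distribution I would use $\Pr{A\le x,\,B\le y}=\Pr{B\le y}-\Pr{A>x,\,B\le y}$ with $A=\abs{\la}_{\min}$, $B=\abs{\la}_{\max}$. The point that lets me pass to the limit cleanly is that the convergence is one sided: the discrete maximum (resp. minimum) over the nodes is bounded above (resp. below) by the continuous supremum $\textnormal{M}$ (resp. infimum $L_{\min}$) and converges to it, so the indicators $\Ind{\abs{\la}_{\max}\le y}$ and $\Ind{\abs{\la}_{\min}>x}$ converge pointwise to $\Ind{\textnormal{M}\le y}$ and $\Ind{L_{\min}>x}$ for $\PP{X,Y,Z}$ outside the boundary locus. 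Bounded convergence then gives $\lim_n\Pr{\abs{\la}_{\max}\le y}=\Pr{\textnormal{M}\le y}$ and reduces $\lim_n\Pr{\abs{\la}_{\min}>x,\abs{\la}_{\max}\le y}$ to $\Pr{L_{\min}>x,\textnormal{M}\le y}$. Partitioning this last event over $\LL{YZ=0}$, $\LL{YZ>0}$, $\LL{YZ<0}$ and, inside $\LL{YZ>0}$, over $\LL{-\frac{X}{2\sqrt{YZ}}\in\CC{-1,1}}$ versus its complement, and substituting the corresponding value of $L_{\min}$, reproduces exactly the four subtracted terms in the statement.

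The step I expect to be the main obstacle is the boundary analysis at the minimum. Because $\abs{\la}_{\min}$ approaches its limit from above, the events $\LL{\abs{\la}_{\min}\le x}$ are delicate exactly on the atom loci of $L_{\min}$ — most notably the event $\LL{YZ>0,\ -\frac{X}{2\sqrt{YZ}}\in\CC{-1,1}}$ where $L_{\min}=0$, which is where finite size singularity lives, and the locus $\LL{L_{\min}=\abs{X}}$ in the degenerate and complex regimes. There one must check whether the extremizing node $\cos\PP{\frac{k\pi}{n+1}}=-\frac{X}{2\sqrt{YZ}}$ (resp. $\cos\PP{\frac{k\pi}{n+1}}=0$) is actually hit by the grid, which depends on arithmetic and parity properties of $n$, and then argue that the resulting boundary contributions vanish in the limit so that the strict inequalities in the statement are the correct ones. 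Controlling this, together with quantifying the density rate needed to guarantee $\abs{\la}_{\min}\to 0$ in the interior case, is the technical core; the regime $YZ<0$ also requires handling $\sqrt{YZ}\in\C$ and the modulus $\sqrt{X^2+4\abs{YZ}\cos^2\PP{\frac{k\pi}{n+1}}}$ rather than a real affine function, but this is a routine variant once the real case is in hand.
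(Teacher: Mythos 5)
Your proposal follows essentially the same route as the paper: both rest on the explicit formula $\lambda_k = X+2\sqrt{YZ}\cos\PP{\tfrac{k\pi}{n+1}}$, the density of the cosine nodes in $[-1,1]$, the decomposition $\Pr{\abs{\lambda}_{\min}\leq x,\abs{\lambda}_{\max}\leq y}=\Pr{\abs{\lambda}_{\max}\leq y}-\Pr{\abs{\lambda}_{\min}>x,\abs{\lambda}_{\max}\leq y}$, and the case split on the sign of $YZ$ and on whether $-\tfrac{X}{2\sqrt{YZ}}\in[-1,1]$; your bounded-convergence phrasing is just a repackaging of the paper's argument that $\Pr{\mathcal{A}_n\setminus\mathcal{A}}\to 0$. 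The boundary delicacy you flag at the end (whether the grid actually hits the root, i.e.\ the behaviour at values of $x$ where the limit law of the minimum has an atom, most notably $x=0$) is exactly the point the paper's own proof also treats only cursorily, so your sketch is no less complete than the published argument.
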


Note that the first part of Theorem~\ref{thm:Jue04May1451} is a consequence of the second part, taking $Z$ as $Y$. But the proof of this second part is based in the symmetric case, where it is possible to talk directly on singular values, whereas in the non-symmetric case we can only consider the eigenvalues. However, if $A$ is $n\times n$ matrix, then we can see directly that
\[
\sigma_{\min}(A)\leq \abs{\lambda_j(A)} \leq \sigma_{\max}(A), \;\;\; j=1,\ldots, n,
\]
where $\lambda_j(A)$ is an eigenvalue of $A$. Thus, the second part gives of lower bound of the distribution of condition number of $T_n^{(3)}$, i.e.,
\begin{equation}\label{eq:Jue04May1539}
\Pr{\frac{\abs{\lambda}_{\max}\PP{T_n^{(3)}}}{\abs{\lambda}_{\min}\PP{T_n^{(3)}}} \geq z} \leq \Pr{\frac{\sigma_{\max}\PP{T_n^{(3)}} }{ \sigma_{\min}\PP{T_n^{(3)}} } \geq z},
\end{equation} which is useful when the quotient inside of the left side is larger with positive probability. Moreover, the singularity of $T_n^{(3)}$ is good understanding due to
\[
\Pr{\sigma_{\min}\PP{T_n^{(3)}} = 0} = \Pr{ \abs{\lambda}_{\min}\PP{T_n^{(3)}} = 0}.
\]
In Section~\ref{sec:Examples}, we can observe that $T_n^{(3)}$ has a positive probability of being singular when $n$ is sufficiently larger for some discrete or continuous distribution, which is different from the case when a random Toeplitz matrix with its all diagonal being continuous and independent rv is invertible with probability one. In the case that $Y$ and $Z$ are discrete rv with $\abs{Y}=\abs{Z}$ as in the case they follow a Rademacher distribution, $T_n^{(3)}$ is a normal matrix \cite[Theorem 3.1]{noschese2013tridiagonal} and ~\ref{eq:Jue04May1539} is an equality.

Also, it is necessary to notice that the random variables in Theorem~\ref{thm:Jue04May1451} do not have any strong restriction on their joint distribution, i.e., they can be dependent rv or do not have any moment. Finally, the structure of the tridiagonal Toeplitz matrix induces that the asymptotic behavior of the condition number depends entirely on the input random variables.

\subsection{\textbf{Proof}} \label{sec:Proof}

We start with a useful lemma, which is the main tool to prove our result.

\begin{lem}\label{lem1427} Let $X,Y$ be two non-degenerated random variables and we define from these $m:= \min\LL{\abs{X-Y},\abs{X+Y}}$ and $M:=\max\LL{\abs{X-Y},\abs{X+Y}}$. For $x,y\in\R$,
\begin{align*}
& \Pr{x<\abs{X+\alpha Y}\leq y \mbox{ for all } \alpha\in[-1,1]} \\
& \, = \Pr{x< 0\leq M \leq y, -\frac{X}{Y}\in[-1,1]} + \Pr{x<m \leq M \leq y, -\frac{X}{Y}\not\in[-1,1]}.
\end{align*}
\end{lem}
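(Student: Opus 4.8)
The plan is to reduce the probabilistic identity to a deterministic, pointwise analysis of the map $\alpha \mapsto \abs{X+\alpha Y}$ on the compact interval $[-1,1]$, carried out for each fixed realization of the pair $(X,Y)$, and then to integrate the resulting indicator identity. Concretely, I would fix a realization $(X,Y)=(a,b)$ and study $f(\alpha):=\abs{a+\alpha b}$ for $\alpha\in[-1,1]$. Since $f$ is the absolute value of an affine function it is convex, so its maximum over $[-1,1]$ is attained at an endpoint; as $f(-1)=\abs{a-b}$ and $f(1)=\abs{a+b}$, this maximum is exactly $M=\max\LL{\abs{a-b},\abs{a+b}}$. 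For the minimum the decisive dichotomy is whether the root $\alpha_0=-a/b$ of the affine part lies in $[-1,1]$: if $-a/b\in[-1,1]$ then $f$ vanishes there and $\min_{[-1,1]}f=0$, while otherwise $f$ is monotone (affine, with no interior kink) on $[-1,1]$ and its minimum is again attained at an endpoint, giving $\min_{[-1,1]}f=m=\min\LL{\abs{a-b},\abs{a+b}}$.

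Next I would rewrite the event of interest. Because $f$ is continuous on the compact interval $[-1,1]$, it attains both its minimum and maximum, so the condition ``$x<\abs{X+\alpha Y}\leq y$ for all $\alpha\in[-1,1]$'' holds if and only if $x<\min_{[-1,1]}f$ and $\max_{[-1,1]}f\leq y$ simultaneously; in particular this event is measurable, being expressed through the $(X,Y)$-measurable functions $m$ and $M$. Combining with the first step, on the event $\LL{-X/Y\in[-1,1]}$ the condition becomes $x<0$ together with $M\leq y$, which is exactly $x<0\leq M\leq y$ (the inequality $0\leq M$ being automatic), whereas on the complementary event it becomes $x<m$ together with $M\leq y$, that is $x<m\leq M\leq y$ (the inequality $m\leq M$ being automatic).

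Since the events $\LL{-X/Y\in[-1,1]}$ and its complement partition the sample space, summing the two corresponding indicator functions and taking expectations produces precisely the two probabilities on the right-hand side. This is the content of the lemma, and the argument reduces to little more than assembling the pointwise equivalences established above.

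The main obstacle is a matter of care rather than depth, namely the treatment of the degenerate direction $Y=0$, where the ratio $-X/Y$ is not literally defined. On $\LL{Y=0}$ the function $f$ is the constant $\abs{X}$, so that $m=M=\abs{X}$ and the all-$\alpha$ condition collapses to $x<\abs{X}\leq y$; this matches the second formula exactly, so the correct convention is to absorb $\LL{Y=0}$ into the event $\LL{-X/Y\notin[-1,1]}$. I would state this convention explicitly and verify that it reconciles both formulas on $\LL{Y=0}$; I would also note that the non-degeneracy of $X$ and $Y$ ensures $m$ and $M$ are genuine (non-constant) random variables, so the statement is not vacuous.
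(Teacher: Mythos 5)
Your proposal is correct and follows essentially the same route as the paper's own proof: partition on whether the zero $-X/Y$ of the affine map $\alpha\mapsto X+\alpha Y$ lies in $[-1,1]$, and identify the minimum and maximum of $\abs{X+\alpha Y}$ over $[-1,1]$ with $0$ or $m$ and with $M$ respectively. Your explicit convention for the degenerate case $Y=0$ is a point the paper leaves implicit, but it does not change the argument.
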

\begin{proof}
Let $f(\alpha) = X+ \alpha Y$ for $\alpha\in[-1,1]$. Note that $f(\alpha)$ is a line whose only zero is at $-X/Y$, assuming that $Y\neq 0$. If $\mathcal{A}:=\{x<\abs{X+\alpha Y}\leq y \mbox{ for all } \alpha\in[-1,1]\}$, we have
\begin{align*}
\Pr{\mathcal{A}}  & = \Pr{\mathcal{A},-\frac{X}{Y}\in[-1,1]} + \Pr{\mathcal{A},-\frac{X}{Y}\not\in[-1,1]} \\
& = \Pr{x< 0\leq M \leq y, -\frac{X}{Y}\in[-1,1]} + \Pr{x<m \leq M \leq y, -\frac{X}{Y}\not\in[-1,1]}.
\end{align*}
\end{proof}

Let $A_n=\LL{\alpha_1,\ldots,\alpha_n}\subset[-1,1]$ for each $n\in\N$ such that $\lim_{n\to\infty} A_n$ is a dense set of $[-1,1]$ with $d_n:=\max\LL{\abs{\alpha_{j}-\alpha_{j}} : \mbox{ for all } i\neq j}$ such that $\LL{d_n}$ is a decreasing sequence. For $x,y\in\R$ and $n\in\N$, we define the event $\mathcal{A}_n$ as
\[
\mathcal{A}_n :=\LL{x<\abs{X+\alpha Y}\leq y \mbox{ for all }\alpha\in A_n}.
\]
Considering the event $\mathcal{A}$ as in the proof of Lemma \ref{lem1427}. Note the event $\mathcal{A}$ implies the event $\mathcal{A}_n$ for all $n\in\N$. From this observation, we have
\[
\abs{\Pr{\mathcal{A}} - \Pr{\mathcal{A}_n}} \leq \Pr{\mathcal{A}_n\setminus \mathcal{A}}.
\]
The event $\mathcal{A}_n\setminus \mathcal{A}$ implies that two possible scenarios: 
\begin{itemize}
\item $\mathcal{E}^{(1)}_n$: there exists an interval $I_n\subset[-1,1]$ such that there is $\beta_n^*\in I_n$  with $\abs{X+\beta_n^* Y} \leq x$. Observe that $\abs{f(\alpha)} = \abs{X+ \alpha Y}$ for $\alpha\in[-1,1]$ takes its absolute minimum value at $-1$, $1$, or $-X/Y$, then $I_n$ should contain one of them. Additionally, we have that one extreme of $I_n$ should be open, its length $\abs{I_n}$ goes to zero as $n\to\infty$, and $I_{n+1}\subset I_n$ for all $n\in\N$.
\item $\mathcal{E}^{(2)}_n$: there exists interval $J_n\subset[-1,1]$ such that there is $\beta_n^{**}\in J_n$ satisfies $\abs{X+\beta_n^{**}Y} > y$. As $\abs{f(\alpha)}$ takes its absolute maximum value at $-1$ or $1$, $J_n$ should contain one of them. Additionally, one extreme of $J_n$ is open, $\abs{J_n}$ goes to zero as $n\to\infty$, and $J_{n+1}\subset J_n$ for all $n\in\N$.
\end{itemize}
From the above observations, we have
\begin{align*}
\lim_{n\to\infty} \abs{\Pr{\mathcal{A}} - \Pr{\mathcal{A}_n}} & \leq \lim_{n\to\infty} \Pr{\mathcal{A}_n\setminus \mathcal{A}} \\
& \leq \lim_{n\to\infty} \PP{\Pr{\mathcal{E}^{(1)}_n} + \Pr{\mathcal{E}^{(2)}_n}} = 0.
\end{align*}
Thus, we can conclude that 
\begin{equation}\label{eqnJue20abr1958}
\lim_{n\to\infty} \Pr{\mathcal{A}_n} = \Pr{\mathcal{A}}.
\end{equation}\hfill$\Box\Box$

The singular values of a random symmetric tridiagonal Toeplitz matrix $\mathcal{T}_n^{(3)}$ of order $n$ are $\abs{X+2\abs{Y}\cos\PP{\frac{\pi}{n+1}j}}$ for $j=1,2,\ldots,n$, see \cite[Section 2]{noschese2013tridiagonal}. Thus
\begin{align*}
& \sigma_{\min}\PP{\mathcal{T}_n^{(3)}} = \min_{j=1,2,\ldots,n}\LL{\abs{X+2\abs{Y}\cos\PP{\frac{\pi}{n+1}j}}}, \\
& \sigma_{\max}\PP{\mathcal{T}_n^{(3)}} = \max_{j=1,2,\ldots,n}\LL{\abs{X+2\abs{Y}\cos\PP{\frac{\pi}{n+1}j}}}.
\end{align*}
Note $\LL{\cos\PP{\frac{\pi}{n+1}j} : j=1,2,\ldots,n}$ approaches to a dense set in $[-1,1]$ as $n$ goes to infinity. Taking $\alpha_j = \cos\PP{\frac{\pi}{n+1}j}$ for $j=1,2,\ldots,n$, and $x,y\in\R$, we have from the previous discussion 
\begin{align*}
& \Pr{\sigma_{\min}\PP{\mathcal{T}_n^{(3)}} \leq x, \sigma_{\max}\PP{\mathcal{T}_n^{(3)}} \leq y} =\\
 & \,\,\,\,\,\, \Pr{\abs{X+\alpha_j (2\abs{Y})} \leq y \mbox{ for all } j=1,\ldots,n} - \Pr{x<\abs{X+\alpha_j (2\abs{Y})} \leq y \mbox{ for all } j=1,\ldots,n} \\
& \,\,\,\longrightarrow \\
& \,\,\,\,\,\, \Pr{\abs{X+\alpha (2\abs{Y})}\leq y \mbox{ for all } \alpha\in[-1,1]}  - \Pr{x<\abs{X+\alpha (2\abs{Y})}\leq y \mbox{ for all } \alpha\in[-1,1]}
\end{align*}
as $n$ goest to infinity. Thus, from Lemma \ref{lem1427} is obtained
\begin{align*}
& \lim_{n\to\infty}  \Pr{\sigma_{\min}\PP{\mathcal{T}_n^{(3)}} \leq x, \sigma_{\max}\PP{\mathcal{T}_n^{(3)}} \leq y} = \\
& \;\;\,\,\, \Pr{ M \leq y} - \Pr{x<0 \leq M \leq y, -\frac{X}{\abs{Y}}\in[-2,2]} - \Pr{x<m \leq M \leq y, -\frac{X}{\abs{Y}}\not\in[-2,2]},
\end{align*}
where $M=\max\LL{\abs{X-2\abs{Y}},\abs{X+2\abs{Y}}}$, $m=\min\LL{\abs{X-2\abs{Y}},\abs{X+2\abs{Y}}}$.

By Continuous Mapping Theorem, the asymptotic distribution of $\kappa_n = \frac{\sigma_{\max}\PP{\mathcal{T}_n^{(3)}}}{\sigma_{\min}\PP{\mathcal{T}_n^{(3)}}}$ converges to $\kappa = \frac{V}{W}$, where $(W,V)$ is distributed as
\begin{align}
& \Pr{W\leq x, V\leq y} =  \\ 
& \,\,\,\,\,\, \Pr{ M \leq y} - \Pr{x<0 \leq M \leq y, -\frac{X}{\abs{Y}}\in[-2,2]} - \Pr{x<m \leq M \leq y, -\frac{X}{\abs{Y}}\not\in[-2,2]},  \nonumber
\end{align}
and from this, we follow that
\begin{align}
\Pr{V\leq y} & = \Pr{M\leq y}, \label{eq:mar25abr1150}\\
\Pr{W\leq x} & = 1 - \Pr{x<0,-\frac{X}{\abs{Y}}\in[-2,2]} - \Pr{x<m,-\frac{X}{\abs{Y}}\not\in[-2,2]}.
\end{align}\hfill$\Box\Box$\\

The eigenvalues of a random (non-symmetric) tridiagonal Toeplitz matrix $T_n^{(3)}$ of order $n$ are $\lambda_j = X+2\sqrt{YZ}\cos\PP{\frac{\pi j}{n+1}}$ for $j=1,2,\ldots, n$, see \cite[Section 2]{noschese2013tridiagonal}. Note that $\lambda_j$ can be complex number. Thus
\begin{align*}
& \abs{\lambda}_{\min}\PP{T_n^{(3)}} = \min_{j=1,2,\ldots,n}\LL{\abs{X+2\sqrt{YZ}\cos\PP{\frac{\pi}{n+1}j}}}, \\
& \abs{\lambda}_{\max}\PP{T_n^{(3)}} = \max_{j=1,2,\ldots,n}\LL{\abs{X+2\sqrt{YZ}\cos\PP{\frac{\pi}{n+1}j}}}.
\end{align*} In the following, we assume that $X,Y,Z\in\R$ with probability 1. Using similar arguments from the symmetric case, we follow that
\begin{align*}
&  \lim_{n\to\infty} \Pr{\abs{\lambda}_{\min}\PP{T_n^{(3)}} \leq x,  \abs{\lambda}_{\max}\PP{T_n^{(3)}} \leq y} = \Pr{\mathcal{W} \leq x, \mathcal{V} \leq y},
\end{align*}
where $\PP{\mathcal{W},\mathcal{V}}$ is distributed as
\begin{align}
&\Pr{\mathcal{W} \leq x, \mathcal{V} \leq y}=\\
&\,\,\, \Pr{\mbox{M}\leq y} -\Pr{YZ=0, x<\abs{X}\leq y} - \Pr{YZ>0, -\frac{X}{2\sqrt{YZ}}\in[-1,1], x<0\leq \mbox{M} \leq y}  \nonumber\\
& \, - \Pr{YZ>0, -\frac{X}{2\sqrt{YZ}}\not\in[-1,1], x<\mu\leq \mbox{M} \leq y} - \Pr{YZ<0, x<\abs{X}\leq\mbox{M}\leq y}, \nonumber
\end{align} where $\mu := \min\LL{\abs{X-2\sqrt{YZ}},\abs{X+2\sqrt{YZ}}}$, $\mbox{M}:= \max\LL{\abs{X-2\sqrt{YZ}},\abs{X+2\sqrt{YZ}}}$. Additionally, 
\begin{align}
\Pr{\mathcal{V}\leq y} & = \Pr{\mbox{M}\leq y}, \label{eq:mar25abr1150}\\
\Pr{\mathcal{W}\leq x} & = 1- \Pr{YZ=0, x<\abs{X}} - \Pr{YZ>0, -\frac{X}{2\sqrt{YZ}}\in[-1,1], x<0} \\
& \,\,\, \,\,\, - \Pr{YZ>0, -\frac{X}{2\sqrt{YZ}}\not\in[-1,1], x<\mu} - \Pr{YZ<0, x<\abs{X}},
\end{align}\hfill$\Box\Box$

\subsection{\textbf{Examples}} \label{sec:Examples}
In the following we give some particular cases of our main result. We assume that the random variables $X,Y,Z$ are independent.\\

\begin{enumerate}
\item $X,Y$ have Rademacher distribution. In the symmetric case, by a direct computation is obtained $m=1$ and $M=3$ with probability $1$, and $\lim_{n\to\infty} \Pr{\sigma_{\min}\PP{\mathcal{T}_n^{(3)}} = 0} = 1$, i.e., $\kappa = +\infty$ with probability 1. In other words, a random triangular symmetric Toeplitz matrix is ill-conditioning with high probability when $n$ is larger.\\

\item $X,Y$ have Standard Cauchy distribution. In the symmetric case, we have 
\begin{align}
\Pr{ V \leq y} & = \Pr{ M \leq y}  = 2 \int_{0}^{\infty} \Pr{\abs{X-2v}\leq y,\abs{X+2v}\leq y}\frac{1}{\pi(1+v^2)} \mbox{d}v \nonumber\\
& = \frac{4}{\pi^2} \mathds{1}_{y\geq 0} \int_{0}^{y/2} \int_{0}^{y-2v}  \frac{1}{(1+t^2)(1+v^2)} \mbox{d}t \mbox{d}v.
\end{align}
Note
\begin{equation}
c_2 := \Pr{W=0} = \frac{4}{\pi^2} \int_{0}^{\infty} \frac{\arctan(2v)}{1+v^2} \mbox{d}v \approx 0.636834,
\end{equation}
i.e., a random tridiagonal Toeplitz matrix with Standard Cauchy distribution is singular with high probability when its dimension is larger. By Leibniz integral rule we have that density of $W\mathds{1}_{W>0}$ is 
\begin{equation}
f_{W\mathds{1}_{W>0}}(w) = \frac{4}{\pi^2(1-c_2)} \mathds{1}_{w>0} \int_{0}^{\infty}\frac{1}{(1+v^2)(1+(w+2v)^2)} \mbox{d}v
\end{equation}
Now, for $z\geq 0$, the distribution of $\kappa$ when the matrix is not singular is
\begin{align*}
& \Pr{\frac{V}{W\mathds{1}_{W>0}}\leq z}  = \int_{-\infty}^{\infty} \Pr{V\leq zw | W\mathds{1}_{W>0}= w} f_{W\mathds{1}_{W>0}}(w) dw \\
&\;\;\; = \frac{16}{\pi^4(1-c_2)}\int_{0}^{\infty} \CC{ \int_{0}^{zw/2} \int_{0}^{zw-2v}  \frac{1}{(1+t^2)(1+u^2)} \mbox{d}t \mbox{d}u} \CC{\int_{0}^{\infty}\frac{1}{(1+v^2)(1+(w+2v)^2)} \mbox{d}v} \mbox{d}w.
\end{align*}

\item $X, Y$ have Standard Normal distribution. In the symmetric case we have the following. Let $\Phi(t)$ and $\phi(t)$ be the distribution and density, respectively, of Standard Normal rv. Then
\begin{align}\label{eq:lun24abr929}
\Pr{ V \leq y} & = \Pr{ M \leq y} = 2 \int_{0}^{\infty} \Pr{\abs{X-2v}\leq y,\abs{X+2v}\leq y}\phi(v) 
\mbox{d}v \nonumber\\
& = 2\mathds{1}_{y\geq 0} \int_{0}^{\infty} \mathds{1}_{y\geq 2v} \CC{\Phi(y-2v) - \Phi(-y+2v)} \phi(v) 
\mbox{d}v \\
& = 2\mathds{1}_{y\geq 0} \int_{0}^{y/2} \CC{\Phi(y-2v) - \Phi(-y+2v)} \phi(v) \nonumber
\mbox{d}v.
\end{align}

Note 
\begin{equation}\label{eq:26abr20231416}
c_0 := \Pr{W=0}=4\int_{0}^{\infty} \CC{\Phi(2v)-\Phi(0)}\phi(v)\mbox{d}v= \frac{\pi + \tan^{-1}\PP{\frac{24}{7}}}{2\pi} \approx 0.704832,
\end{equation}
 i.e., $\kappa = +\infty$ with probability bigger than $0.7$, in other words, a random tridiagonal symmetric Toeplitz matrix with Standard Normal distribution is ill-conditioning with high probability. By Leibniz integral rule we have that density of $W\mathds{1}_{W>0}$ is
\begin{equation}\label{eqn:lun24abr1539}
f_{W\mathds{1}_{W>0}}(w)=\frac{4}{1-c_0} \mathds{1}_{ w > 0}\int_{0}^{\infty} \phi(w+2v)\phi(v) \mbox{d}v.
\end{equation}
Now, for $z\geq 0$, the distribution of $\kappa$ when it is finite is

\begin{align*}
& \hspace{-0.5cm}\Pr{\frac{V}{W\mathds{1}_{W>0}}\leq z}  = \int_{-\infty}^{\infty} \Pr{V\leq zw | W\mathds{1}_{W>0}= w} f_{W\mathds{1}_{W>0}}(w) dw \\
& = \int_{-\infty}^{\infty} \CC{2\mathds{1}_{zw\geq 0} \int_{0}^{zw/2} \CC{\Phi(zw-2t) - \Phi(-zw+2t)} \phi(t)\mbox{d}t}  \CC{\frac{4}{1-c_0} \mathds{1}_{ w > 0}\int_{0}^{\infty} \phi(w+2v)\phi(v) \mbox{d}v} dw \\
& = \frac{8}{1-c_0}\int_{0}^{\infty}  \CC{\int_{0}^{zw/2} \CC{\Phi(zw-2t) - \Phi(-zw+2t)} \phi(t)\mbox{d}t} \CC{\int_{0}^{\infty} \phi(w+v)\phi(v) dv} dw.
\end{align*}\\

\item $X,Y,Z$ have Rademacher distribution. In the non-symmetric case, we have that $\mu\in\LL{1,\sqrt{5}}$ and $\mbox{M}\in\LL{3,\sqrt{5}}$ with probability 1. Moreover,
\begin{align*}
\Pr{\mathcal{W}=0} & = 1 - \Pr{YZ>0, \frac{X}{\sqrt{YZ}}\not\in[-2,2]} - \Pr{YZ < 0} = \frac{1}{2},
\end{align*}
i.e., a random non-symmetric tridiagonal Toeplitz matrix $T_n^{(3)}$ of order $n$ is singular with probability approximately $0.5$ when $n$ is larger, in other words, it is ill-conditionally half the time.\\

\item $X, Y, Z$ have Standard Normal distribution. In the non-symmetric case we have the following. The distribution of $\mathcal{V}$ is
\begin{align}\label{eqn:lun01may1257}
\Pr{\mathcal{V}\leq y} & = \Pr{\mbox{M}\leq y} = 2\int_{0}^{\infty} \Pr{\abs{v-2\sqrt{YZ}}\leq y,\abs{v+2\sqrt{YZ}}\leq y} \phi(v) \mbox{d}v \\
& = 2 \mathds{1}_{y\geq 0} \int_{0}^{y} \CC{\int_{0}^{\PP{\frac{y-v}{2}}^2} f_{YZ}(t) \mbox{d}t + \int_{0}^{(y^2-v^2)/4} f_{YZ}(t)\mbox{d}t} \phi(v)\mbox{d}v, \nonumber
\end{align}
where 
\begin{equation}
f_{YZ}(t) = \frac{1}{\pi} \int_{0}^{\infty}\frac{1}{s}\exp\PP{-\frac{1}{2}\PP{s^2+\frac{t^2}{s^2}}} \mbox{d}s.
\end{equation}
Note
\begin{equation}\label{eq:lun1may1131}
c_1 := \Pr{W=0}= 2\int_{0}^{\infty} \Phi(2\sqrt{t}) f_{YZ}(t) \mbox{d}t -\frac{1}{2} \approx 0.351488.
\end{equation}
i.e, a random tridiagonal matrix with Standard Normal distribution is ill-conditioning with probability approximately $0.351488$ when its order is larger. By Leibniz integral rule we have that density of $\mathcal{W}\mathds{1}_{\mathcal{W}>0}$ is
\begin{equation}\label{eqn:lun01may1247}
f_{\mathcal{W}\mathds{1}_{\mathcal{W}>0}}(w)=\frac{1}{1-c_1} \mathds{1}_{w>0}\CC{\phi(w) + 2\int_{0}^{\infty} \phi(w+2\sqrt{t})f_{YZ}(t)\mbox{d}t}.
\end{equation}
Now, for $z\geq 0$, the distribution of $\mathcal{V}/\mathcal{W}$ when it is finite is
\begin{align*}
& \Pr{\frac{\mathcal{V}}{\mathcal{W}\mathds{1}_{\mathcal{W}>0}}\leq z}  = \int_{-\infty}^{\infty} \Pr{\mathcal{V}\leq zw | \mathcal{W}\mathds{1}_{\mathcal{W}>0}= w} f_{\mathcal{W}\mathds{1}_{\mathcal{W}>0}}(w) \mbox{d}w,
\end{align*}
\end{enumerate}
where it is needed to replace the expressions ~\ref{eqn:lun01may1257}, ~\ref{eqn:lun01may1247} appropriately. 


\end{document}